\tikzstyle{none}=[inner sep=0pt]
\definecolor{hexcolor0xf81e1c}{rgb}{0.973,0.118,0.110}
\definecolor{hexcolor0x3c00ff}{rgb}{0.235,0.000,1.000}
\definecolor{hexcolor0x24fe00}{rgb}{0.141,0.996,0.000}
\tikzstyle{whitevertex}=[circle,fill=White,draw=Black, scale = 0.5]
\tikzstyle{vertex}=[circle,fill=White,draw=Black, scale = 0.5]
\tikzstyle{redvertex}=[circle,fill=hexcolor0xf81e1c,draw=Black]
\tikzstyle{bluevertex}=[circle,fill=hexcolor0x3c00ff,draw=Black]
\tikzstyle{greenvertex}=[circle,fill=hexcolor0x24fe00,draw=Black]
\tikzstyle{textbox}=[rectangle,fill=none,draw=none]
\tikzstyle{arc}=[Black, ->]
\newtheorem{theorem}{Theorem}[section]
\newtheorem{corollary}[theorem]{Corollary}
\newtheorem{lemma}[theorem]{Lemma}
\newtheorem{proposition}[theorem]{Proposition}
\begin{document}

\title{Colourings of $(m, n)$-coloured mixed graphs}

\author{Gary MacGillivray, Shahla Nasserasr, Feiran Yang}
\date{}

\maketitle 

\begin{abstract}
A mixed graph is, informally, an object obtained from a simple undirected graph by choosing
an orientation for a subset of its edges.
A mixed graph is $(m, n)$-coloured if each edge is assigned one of $m \geq 0$ colours,
and each arc is assigned one of $n \geq 0$ colours.
Oriented graphs are $(0, 1)$-coloured mixed graphs,
and 2-edge-coloured graphs are $(2, 0)$-coloured mixed graphs.
We show that results of Sopena for vertex colourings of oriented graphs, and
of Kostochka, Sopena and Zhu for vertex colourings oriented graphs and 2-edge-coloured graphs,
are special cases of results about vertex colourings of   
$(m, n)$-coloured mixed graphs.
Both of these can be regarded as a version of Brooks' Theorem.
\end{abstract}

\section{Introduction}

There are parallels between vertex colourings of oriented graphs 
and vertex colourings of 2-edge-coloured graphs:
statements that hold for one family often also hold for the other with more or less  the same proof.
For examples see \cite{AM, BrewsterGraves, KlosMacG, KMRS, KSZ, RS, SenThesis}.
On the other hand, Sen \cite{SenThesis} gives examples where results that hold for oriented graphs do not hold for 2-edge-coloured graphs.  For example, the maximum value of the oriented chromatic number of an orientation of $P_5$ is 3,  while there are 2-edge-colourings of $P_5$ that have chromatic number 4.
Thus it is unlikely that there is a direct translation of results for graphs in one of these families to graphs in the other one.

A connection between oriented graphs and 2-edge-coloured graphs arises through the 
\emph{$(m, n)$-coloured mixed graphs} introduced by Ne\v{s}et\v{r}il and Raspaud \cite{NesetrilRaspaud},
of which both are special cases.
Theorems that hold for $(m, n)$-coloured mixed graphs hold for subfamilies, and
methods which prove such results can be applied to subfamilies.
Conversely, if a statement holds for both 2-edge-coloured graphs and oriented graphs with essentially the same proof, 
then there is some evidence that these may be special cases of a general
theorem for $(m, n)$-coloured mixed graphs.
For example, results in \cite{AM, RS} are shown to hold for $(m, n)$-mixed graphs in \cite{NesetrilRaspaud}.
An ongoing project is to find  generalizations to $(m,n)$-coloured of theorems common to 
2-edge-coloured graphs and oriented graphs.

In this paper we consider vertex colourings of $(m, n)$-coloured mixed graphs and
bounds for the $(m, n)$-coloured mixed chromatic number, $\chi(G, m, n)$.
Definitions and terminology appear in the next section.
In the subsequent sections we extend results which can be considered as
versions of Brooks' Theorem to $(m,n)$-coloured mixed graphs.
Sopena \cite{Sopena} proved constructively  that the oriented chromatic number of an oriented
graph with maximum degree $\Delta \geq 2$ satisfies
$\chi_o \leq (2\Delta - 1)2^{2\Delta - 2}$.
In Section \ref{SecConstructive} we extend this statement 
to $(m, n)$-coloured mixed graphs by using similar methods to prove constructively that
the $(m, n)$-coloured mixed chromatic number of an 
$(m, n)$-coloured mixed graph $G$  with maximum degree $\Delta \geq 2$ 
satisfies $\chi(G, m, n) \leq (2\Delta - 1)(m+2n)^{2\Delta - 2}$.
Kostochka, Sopena and Zhu \cite{KSZ} use the probabilistic method to give a 
better bound.  They show that
the oriented chromatic number of an oriented
graph with maximum degree $\Delta$ satisfies
$\chi_o \leq \Delta^2 2^{\Delta+1}$,
and the corresponding statement holds 
for the number of colours needed for a vertex-colouring of a 2-edge-coloured graph.
In Section \ref{SecProb} 
we extend this statement 
to $(m, n)$-coloured mixed graphs by using similar methods to show that
$\chi(G, m, n) \leq \Delta^2(m+2n)^{\Delta + 1}$.

Das, Nandi and Sen \cite{DNS} proved that 
$\chi(G, m, n) \leq 2(\Delta - 1)^{m+2n}(m + 2n)^{\Delta-\min\{m+2n, 3\}+2}$ 
for $m+2n \geq 2$ and $\Delta \geq 5$, which is better
than either bound above when $m+2n = 2$, hence in particular for oriented graphs.
For each fixed value of $\Delta \geq 5$, the bound of $\Delta^2(m+2n)^{\Delta + 1}$ is better
for sufficiently large values of $m+2n$.
Das, Nandi and Sen also showed that there exist $(m, n)$-coloured mixed graphs $G$ with 
$\chi(G, m, n)$ $ \geq (m+2n)^{\Delta/2}$, and established connections between 
the $(m, n)$-coloured mixed chromatic number, the acyclic chromatic number of the underlying graph, and the arboricity of the underlying graph (also see \cite{NesetrilRaspaud}).

The $(m, n)$-coloured mixed chromatic number of planar graphs, partial 2-trees
and outerplanar graphs with given girth is studied in \cite{MPRS}.
A survey of results about the $(m, n)$-coloured mixed chromatic number of 
other families of $(m, n)$-coloured mixed graphs is given in 
\cite{DNRS}.
Further results on colourings and homomorphisms of $(m,n)$-coloured mixed graphs appear in
\cite{DuffyThesis}.

\section{$(m,n)$-coloured mixed graphs}

A \textit{mixed graph} is an ordered triple $G=(V(G), E(G), A(G))$, where 
$V(G)$ is a set of objects called \emph{vertices},
$E(G)$ is a set of unordered pairs of distinct vertices called \emph{edges}, 
$A(G)$ is a set of ordered pairs of distinct vertices called \emph{arcs},
and the underlying undirected graph of $G$ is a simple graph.
Any two adjacent vertices in a mixed graph are joined by a single edge or a single arc, and not both.

It is possible to define mixed graphs more generally so that loops and various types of
multiple adjacencies are allowed.
The objects defined in the previous paragraph would then be ``simple'' mixed graphs.
The notion of vertex colouring we consider makes sense only for ``simple''
mixed graphs, so we have chosen to omit the more general definition. 
We will not use the adjective ``simple'' in the remainder of the paper.

A mixed graph $G$ is \emph{$(m, n)$-coloured} if each edge is assigned one of the 
$m$ colours $1, 2, \ldots, m$ and each arc is assigned one of the $n$ colours $1, 2, \ldots, n$.
For $1 \leq i \leq m$, let $E_i(G)$ be \emph{the set of edges of colour $i$}, and
for $1 \leq j \leq n$, let  $A_j(G)$ be  \emph{the set of arcs of colour $j$}.

When the context is clear we write $V, E$ and $A$ instead of $V(G), E(G)$ and $A(G)$, respectively, and 
similarly for other subsets or parameters related to a graph.

We sometimes say that an $(m, n)$-coloured mixed graph has a property belonging to
its underlying undirected graph.
We call an $(m, n)$-coloured mixed graph \emph{complete} if its underlying undirected graph is complete.  
The  \emph{maximum degree}, $\Delta$, of an $(m, n)$-coloured mixed graph is the
maximum degree of its underlying undirected graph.  

Let $G$ and $H$ be $(m, n)$-coloured mixed graphs.
A ($(m,n)$-coloured) \emph{homomorphism} of $G$ to $H$ is a function $f: V(G) \to V(H)$ such that
if the edge $ab\in E_i(G)$, then $f(a)f(b)\in E_i(H)$,
and if the arc $ab\in A_j(G)$ then $f(a)f(b)\in A_j(H)$. 
A homomorphism of  $G$ to $H$ preserves edges, arcs, and colours.
We may write $G\rightarrow H$ to denote the existence of 
a homomorphism of $G$ to $H$.

A (vertex) $k$-\emph{colouring} of an $(m, n)$-coloured mixed graph $G$ is a homomorphism to an $(m, n)$-coloured mixed graph on $k$ vertices.
As in \cite{NesetrilRaspaud}, we define  the smallest integer $k$ such that there exists a $k$-colouring of $G$ to be the
\emph{$(m, n)$-coloured mixed chromatic number of $G$}, and denote it by
$\chi(G, m, n)$.
Note that, for graphs (i.e.,  $(1, 0)$-coloured mixed graphs), the quantity $\chi(G, 1, 0)$ is the usual chromatic number, 
and for oriented graphs (i.e.,  $(0, 1)$-coloured mixed graphs),  the quantity $\chi(G, 0, 1)$ is the oriented chromatic number.

Let $k$ be a positive integer.
An $(m, n)$-coloured mixed graph $H$ is called \emph{$k$-hom-universal} if $G \to H$ 
whenever $G$ is an $(m, n)$-coloured mixed graph with $\Delta \leq k$.
The number of vertices in a $k$-hom-universal $(m,n)$-coloured mixed graph $H$ is an upper bound 
on the $(m, n)$-coloured mixed chromatic number of any 
$(m, n)$-coloured mixed graph with $\Delta \leq k$.

Let $G$ be an $(m, n)$-coloured mixed graph, and let $x \in V$.
If $y$ is adjacent to $x$, then 
we record the colour and orientation of the edge or arc joining $x$ and $y$ (with respect to $x$)
by an integer  $c \in \{1, 2, \ldots, m+2n\}$:  
\begin{itemize}
\item if $c \in \{1, 2, \ldots, m\}$ then $x$ and $y$ are joined by an edge of colour $c$;
\item if $c \in \{m+1, m+2, \ldots, m+n\}$ then there is an arc of colour $c-m$ from $x$ to $y$;
\item if $c \in \{m+n+1, m+n+2, \ldots, m+2n\}$ then there is an arc of colour $c-(m+n)$ from $y$ to $x$.
\end{itemize}

Let $X  = \{v_1, v_2, \ldots, v_d\}$ be a subset of vertices of an $(m,n)$-coloured mixed graph $G$, and let $x$ be a vertex of $G$ which is adjacent to every vertex in $X$.  Denote by $a_G(x, X)$ the element of $\mathbb{Z}_{m+2n}^d$ whose $j$-th coordinate 
records the colour and orientation of $xv_j$.

Let $t \geq 0$ be an integer.  
An $(m, n)$-coloured mixed graph $G$ is said to have 
\emph{Property $P_{a, b}$} if, for every $(m, n)$-coloured complete subgraph with vertex set 
$X = \{v_1, v_2, \ldots, v_\ell\}$, where $0 \leq \ell \leq a$, and 
every $\ell$-tuple $L = (c_1, c_2, \ldots, c_\ell) \in \mathbb{Z}_{m+2n}^\ell$, there exist
at least $b$ vertices $x \in V(G)$ such that 
$a_G(x, X) = L$.
Informally, $G$ has Property $P_{a, b}$ if, for every $(m, n)$-coloured complete subgraph of size at most $a$ and every possible way of extending it to a complete subgraph of size one greater, there are at least $b$ vertices with the required adjacencies.

\section{A constructive Brooks' Theorem}
\label{SecConstructive}

In this section we generalize results of Sopena \cite{Sopena} for oriented graphs to $(m,n)$-coloured mixed graphs by showing that, for each $k \geq 2$, each member of a family 
of  $(m, n)$-coloured mixed graphs defined by 
Ne\v{s}et\v{r}il and Raspaud \cite{NesetrilRaspaud} is $k$-hom-universal.
This family of $(m, n)$-coloured mixed graphs is defined in two steps.
We first define a family of $(m, n)$-coloured mixed bipartite graphs, each of which
gives rise to a different member of the family we will ultimately use.

Let $\mathcal{H}_{m+2n, m+2n}$ be the family of $(m, n)$-coloured mixed bipartite graphs obtained from 
the complete bipartite graph $K_{m+2n, m+2n}$ with bipartition $(A, B)$ as follows.
Let $A = \{a_1, a_2, \ldots, a_{m+2n}\}$ and $B = \{b_1, b_2, \ldots, b_{m+2n}\}$.
Let $F_1, F_2, \ldots, F_{m+2n}$ be a 1-factorization of  $K_{m+2n, m+2n}$:
\begin{itemize}
\item for $i = 1, 2, \ldots, m$, colour each edge in $F_i$ with colour $i$;
\item for $j = 1, 2, \ldots, n$, replace each edge in $F_{m+j}$ by an arc of colour $j$ oriented from 
$A$ to $B$, and each edge in  $F_{m+n+ j}$ by an arc of colour $j$ oriented from $B$ to $A$.
\end{itemize}
The underlying undirected graph of each $H \in \mathcal{H}_{m+2n, m+2n}$ is $K_{m+2n, m+2n}$.
If $H \in \mathcal{H}_{m+2n, m+2n}$, then every vertex of $H$ is incident with an edge of each colour,
and an arc of each colour and each possible orientation.

The family of $(m,n)$-coloured mixed graphs $\mathcal{Z}_{m,n,q}$, $q \geq 1$, is then defined as follows.
Each member of the family has vertex set 
$$V = \{(i; v_1, v_2, \ldots, v_q): 1 \leq i \leq q,\ v_i = ``\cdot", \mbox{ and } 1 \leq v_j \leq m + 2n \mbox{ for } 1 \leq j \leq q\}.$$
A vertex $(i; v_1, v_2, \ldots, v_q)$ is said to have \emph{index $i$}.
To define the edge set of $Z \in \mathcal{Z}_{m,n,q}$, first choose $H \in \mathcal{H}_{m+2n, m+2n}$. 
(Different choices of $H$ lead to different $(m,n)$-coloured mixed graphs $Z$.)
Let ${\bf v} = (i; v_1, v_2, \ldots, v_q)$ and ${\bf w} = (j; w_1, w_2, \ldots, w_q)$ be vertices of $Z$,
where $i < j$.
Let $s = v_j$ and $t = w_i$.
The vertices ${\bf v}$ and ${\bf w}$ are joined by
\begin{itemize}
\item an edge of colour $c$ if  $a_s \in A$ and $b_t \in B$ are joined in $H$ by an edge of colour $c$;
\item an arc of colour $c$ oriented from ${\bf v}$ to ${\bf w}$ if $a_s \in A$ and $b_t \in B$ are joined in $H$ by an arc of colour $c$ oriented from $a_s$ to $b_t$;
\item an arc of colour $c$ oriented from ${\bf w}$ to ${\bf v}$ if $a_s \in A$ and $b_t \in B$ are joined in $H$ by an arc of colour $c$ oriented from $b_t$ to $a_s$.
\end{itemize}
The underlying undirected graph of each $Z \in \mathcal{Z}_{m,n,q}$ is the complete $q$-partite graph in which each class of the partition has size $m + 2n$, and contains $(m+2n)^{q-1}$ vertices.



\begin{proposition}
Let $q \geq 2$ be an integer.
If $Z \in \mathcal{Z}_{m, n, q}$, then $Z$ has Property $P_{q-1, 1}$.
\label{PropExtend}
\end{proposition}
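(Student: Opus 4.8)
The plan is to verify the definition of Property $P_{q-1,1}$ directly: given an $(m,n)$-coloured complete subgraph $X=\{{\bf v}^{(1)},\dots,{\bf v}^{(\ell)}\}$ of $Z$ with $0\le\ell\le q-1$, and a target tuple $L=(c_1,\dots,c_\ell)\in\mathbb{Z}_{m+2n}^\ell$, I must exhibit at least one vertex ${\bf x}\in V(Z)$ with $a_Z({\bf x},X)=L$. First I would record the structural fact that the underlying graph of $Z$ is complete $q$-partite with parts indexed by the ``dot position'': two vertices are adjacent if and only if they have different indices. Hence the vertices of $X$ have pairwise distinct indices; say ${\bf v}^{(r)}$ has index $i_r$ and coordinate string $(v^{(r)}_1,\dots,v^{(r)}_q)$, with the $i_r$ all distinct. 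Since $\ell\le q-1$, there is at least one index $p\in\{1,\dots,q\}$ not used by any vertex of $X$; this $p$ will be the index of the vertex ${\bf x}$ I construct, which guarantees ${\bf x}$ is adjacent to every vertex of $X$.

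The heart of the argument is choosing the coordinates of ${\bf x}=(p; x_1,\dots,x_q)$ (with $x_p=``\cdot"$) so that the edge/arc from ${\bf x}$ to ${\bf v}^{(r)}$ realises the prescribed colour-orientation code $c_r$. By the edge rule for $Z$, the adjacency between ${\bf x}$ (index $p$) and ${\bf v}^{(r)}$ (index $i_r$) is governed by the pair $(s,t)$ where: if $p<i_r$ then $s=x_{i_r}$ and $t=v^{(r)}_p$; if $i_r<p$ then $s=v^{(r)}_p$ and $t=x_{i_r}$; and the code is then read off from how $a_s$ and $b_t$ are joined in the chosen $H\in\mathcal H_{m+2n,m+2n}$. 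The key combinatorial input is that in $H$, for every vertex on one side and every code $c\in\{1,\dots,m+2n\}$ there is exactly one neighbour on the other side realising $c$ — this is precisely the statement (quoted in the excerpt) that the colours/orientations at each vertex of $H$ come from a $1$-factorization, so they partition the $m+2n$ incident edges/arcs. Concretely: if $i_r<p$, then $s=v^{(r)}_p$ is already determined by ${\bf v}^{(r)}$, and I set $x_{i_r}$ to be the unique $t$ with $b_t$ joined to $a_s$ in $H$ by code $c_r$; if $p<i_r$, then $t=v^{(r)}_p$ is determined, and I set $x_{i_r}$ to be the unique $s$ with $a_s$ joined to $b_t$ by code $c_r$. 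Because the indices $i_1,\dots,i_\ell$ are distinct and all different from $p$, these assignments fix $\ell$ distinct coordinates of ${\bf x}$ with no conflict; the remaining coordinates (other than position $p$) may be set arbitrarily in $\{1,\dots,m+2n\}$. This produces a genuine vertex of $Z$ with $a_Z({\bf x},X)=L$, establishing $P_{q-1,1}$.

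I would also dispose of the degenerate case $\ell=0$ separately: then the condition just says $V(Z)\neq\varnothing$, which holds since $q\ge 2$ forces $(m+2n)^{q-1}\ge 1$ vertices (indeed we should note $m+2n\ge 1$ is implicit, else $Z$ is empty and the family is vacuous). The main obstacle, and the only place needing care, is the bookkeeping of the two cases $p<i_r$ versus $i_r<p$ and making sure that ``the code $c_r$ seen from ${\bf x}$'' matches the convention: when $i_r<p$ the edge-definition of $Z$ orients arcs from ${\bf v}^{(r)}$ to ${\bf x}$ according to the $a_s\to b_t$ direction in $H$, so reading the code at ${\bf x}$ (the head) must invoke the ``$c\in\{m+n+1,\dots,m+2n\}$'' branch of the code definition; I would check that the $1$-factorization $F_{m+1},\dots,F_{m+2n}$ being indexed so that $F_{m+j}$ is oriented $A\to B$ and $F_{m+n+j}$ is oriented $B\to A$ makes the two cases symmetric and that every code in $\{1,\dots,m+2n\}$ is attainable in each. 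Once that correspondence is pinned down, existence (indeed, counting how many completions there are) is immediate. I do not anticipate needing anything beyond the definitions already in the excerpt.
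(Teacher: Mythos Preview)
Your proof is correct and is essentially the same approach as the paper's: the paper simply asserts that the proposition ``follows immediately from the construction of $Z$,'' and what you have written is precisely the unpacking of that construction---choosing an unused index $p$ for the new vertex and then using the $1$-factorization property of $H$ to fix each coordinate $x_{i_r}$. Your careful bookkeeping of the two cases $p<i_r$ and $i_r<p$ and of the code conventions is exactly the verification the paper leaves to the reader.
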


\begin{proof}
This follows immediately from the construction of $Z$.
\end{proof}

\begin{theorem}\label{Brooksish}
Let $k \geq 2$ be an integer.
Each $(m, n)$-coloured mixed graph $Z \in \mathcal{Z}_{m,n,2k-1}$
is $k$-hom-universal.
\label{ThmMapping}
\end{theorem}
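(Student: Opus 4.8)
The plan is to show that an arbitrary $(m,n)$-coloured mixed graph $G$ with $\Delta(G) \le k$ admits a homomorphism to $Z \in \mathcal{Z}_{m,n,2k-1}$ by building the homomorphism greedily, one vertex at a time, along an arbitrary linear order $x_1, x_2, \ldots, x_N$ of $V(G)$. The key structural facts I would lean on are: (i) by Proposition \ref{PropExtend}, $Z$ has Property $P_{2k-2, 1}$, meaning that any $(m,n)$-coloured complete subgraph of $Z$ on at most $2k-2$ vertices together with any prescribed extension pattern $L \in \mathbb{Z}_{m+2n}^\ell$ is realized by at least one vertex of $Z$; and (ii) the construction of $Z$ guarantees that the images I choose for the already-coloured neighbours of the current vertex always induce a complete subgraph of $Z$ of the appropriate colour type, so that Property $P$ actually applies.

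The main point to verify is the invariant that makes the greedy step work. When I come to colour $x_r$, let $N^-(x_r) = \{x_{i_1}, \ldots, x_{i_\ell}\}$ be its already-coloured neighbours, with $\ell \le \Delta \le k$. I want to choose $f(x_r) \in V(Z)$ so that for each such neighbour, the edge/arc joining $f(x_r)$ and $f(x_{i_j})$ in $Z$ has exactly the colour and orientation (relative to $f(x_r)$) that $x_r x_{i_j}$ has relative to $x_r$ in $G$ — i.e. $a_Z(f(x_r), \{f(x_{i_1}), \ldots, f(x_{i_\ell})\}) = (c_1, \ldots, c_\ell)$ where $c_j$ records the colour/orientation of $x_r x_{i_j}$. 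For Property $P_{2k-2,1}$ to be invokable I need $\{f(x_{i_1}), \ldots, f(x_{i_\ell})\}$ to be the vertex set of an $(m,n)$-coloured complete subgraph of $Z$ and I need $\ell \le 2k-2$; the latter holds since $\ell \le k \le 2k-2$ for $k \ge 2$. The former is the crux: I must arrange that \emph{distinct} already-coloured neighbours of $x_r$ always receive \emph{distinct}, and moreover pairwise-adjacent, images in $Z$.

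Distinctness and adjacency of the neighbour-images is exactly where the index-structure of $\mathcal{Z}_{m,n,2k-1}$ and the parameter $2k-1$ enter. The idea I would pursue is to maintain, as part of the induction hypothesis, that any two vertices $u, v$ of $G$ at distance at most $2$ from each other get images with different indices; equivalently, each vertex together with its neighbourhood gets injectively mapped and lands in distinct parts of the complete $q$-partite graph underlying $Z$. Since the parts of $Z$ are indexed by $\{1, \ldots, 2k-1\}$ and a vertex $x_r$ together with the neighbour of $x_r$ we are matching against, plus the neighbours of those neighbours that were previously constrained, involves at most ... — here I need to count carefully — at most $2k-1$ indices in play, so a free index is always available. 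Two vertices in distinct parts of $Z$ are adjacent by construction, so the neighbour-images automatically induce a complete subgraph of $Z$; its edge/arc colours are whatever $Z$'s construction assigns, and that is fine because Property $P$ quantifies over \emph{every} such complete subgraph and \emph{every} target pattern $L$. Having secured a valid complete subgraph of size $\le 2k-2$ and the desired pattern $L$ determined by $G$, Property $P_{2k-2,1}$ hands me at least one vertex $x$ with $a_Z(x,\cdot) = L$; I set $f(x_r) = x$, taking care (again using that a free index exists) that this choice also respects the distance-$2$ injectivity invariant for future steps.

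The main obstacle, and the place the proof must be done honestly rather than waved through, is the bookkeeping that shows the number of indices that are ``forbidden'' for $f(x_r)$ — because they are used by already-coloured vertices within distance $2$ of $x_r$ whose images constrain the current complete-subgraph realization — never exceeds $2k-2$, so that an admissible index (and hence, via Property $P$, an admissible vertex) always remains. This is precisely the point at which Sopena's argument for oriented graphs uses $2\Delta - 1$ parts, and the $(m,n)$-coloured version should go through with the same counting once the roles of ``edge/arc colour relative to $x$'' are substituted for ``arc direction''; I would organize it as: (a) state the induction hypothesis including the distance-$2$ index-injectivity; (b) at step $r$, enumerate the at most $2(k-1)$ vertices whose images already fix indices that $f(x_r)$ must avoid (the $\le k-1$ coloured neighbours of $x_r$, and for the realization of the target pattern possibly one further index per neighbour, capped using $\Delta \le k$), conclude a free index exists in $\{1,\ldots,2k-1\}$; (c) invoke Property $P_{2k-2,1}$ with $X = f(N^-(x_r))$ and $L$ the pattern read off $G$ to obtain $f(x_r)$; (d) check the hypothesis is restored. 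Once the counting in (b) is pinned down, the rest is routine.
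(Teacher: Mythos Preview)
Your proposal correctly isolates the two essential ingredients---Property $P_{2k-2,1}$ and the requirement that the images of the already-coloured neighbours of the current vertex lie in pairwise distinct parts of $Z$ (so that they form a complete subgraph and Property~$P$ applies). The gap is precisely where you flag it: the invariant you propose cannot be maintained, and your count in~(b) does not go through.

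The invariant ``any two vertices at distance at most $2$ receive images with distinct indices'' is equivalent to asking that the index map be a proper colouring of the square $G^2$. But $\chi(G^2)$ can far exceed $2k-1$: for $k=3$ the Petersen graph is $3$-regular with diameter $2$, so $G^2=K_{10}$, and you would need ten distinct indices where only five are available. More generally, to maintain your invariant when placing $x_r$ you must avoid the index of every already-coloured vertex sharing a common (possibly not-yet-coloured) neighbour with $x_r$, and there can be order $k^2$ such vertices. Your estimate of ``at most $2(k-1)$'' forbidden indices is therefore incorrect; in particular the claim that $x_r$ has at most $k-1$ already-coloured neighbours fails for the last vertex in an arbitrary linear order of a $k$-regular graph.

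The paper avoids any global invariant. It argues by induction on $|V(G)|$: delete a vertex $x$, obtain inductively a homomorphism $f\colon G-x\to Z$, and then exploit the fact that each neighbour $v_i$ of $x$ has degree at most $k-1$ \emph{in $G-x$}. This degree drop is the whole trick: it means that when one re-selects $f(v_i)$, at most $k-1$ indices are occupied by $f(N_{G-x}(v_i))$ and at most $i-1\le k-1$ further indices by the already re-selected $v_1,\dots,v_{i-1}$, so at most $2k-2$ indices are forbidden and a free one remains among the $2k-1$ parts (the construction of $Z$ guarantees a suitable vertex in that free part). After this local adjustment the set $\{f(v_1),\dots,f(v_j)\}$ spans a complete subgraph of size $j\le k\le 2k-2$, and Property $P_{2k-2,1}$ provides an image for $x$. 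The point is that one never asserts distinct indices globally for distance-$2$ pairs---only for the neighbours of the single vertex currently being inserted, and only after forcing it by a one-off reassignment that the degree drop makes possible.
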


\begin{proof}
We must show that, for any  $(m, n)$-coloured mixed graph $G$ with $\Delta \leq k$
and any $Z \in \mathcal{Z}_{m,n,2k-1}$,
there is a homomorphism $G \to Z$.
We will prove by induction on $p = |V(G)|$ the stronger statement that if $G$ is a $(m, n)$-coloured mixed graph with $\Delta \leq k$ and any $Z \in \mathcal{Z}_{m,n,2k-1}$, there is a homomorphism $G \to Z$ such that for every $v \in V(G)$ the images of neighbours of $v$ have distinct indices.
%
The statement is clear if $p=1$.  
Suppose it holds for all $(m, n)$-coloured mixed graphs on at most $p-1$ vertices, for some $p \geq 2$.
Consider an $(m, n)$-coloured mixed graph $G$ on $p$ vertices.

Let $x \in V(G)$.
Let the neighbours of $x$ in the underlying undirected graph of $G$ 
be $v_1, v_2, \ldots, v_j$, where $j \leq \Delta \leq k$.

By the induction hypothesis, 
there is a homomorphism $(G-x) \to Z$ such that 
for every $v \in V(G-x)$ the images of neighbours of $v$ have distinct indices.

We claim that in any such homomorphism there are at least $k$ possible images for $v_1$.
Since $v_1$
has degree at most $\Delta-1 \leq k-1$ in $G-x$, and
the images of its neighbours have different indices, at most $k-1$ indices
are forbidden for the image of $v_1$ 
(adjacent vertices can not be assigned images with the same index).
Note, also, that each neighbour of $v_1$ determines a different component in 
the image of $v_1$.  
Therefore, there are at least $(2k-1)-(k-1)=k$ possible images for $v_1$.
This proves the claim.

Similarly, in any such homomorphism there are at least $k$ possible images for each of $v_2, v_3, \ldots, v_j$.
Thus, since $j \leq k$, there exists a homomorphism
of $G-x$ to $Z$ such that 
that the images of the neighbours of $x$ (in $G$)
also have distinct indices.

Since $2k-2 \geq k$ and $Z$ has Property $P_{2k-2, 1}$ 
(by Proposition \ref{PropExtend}), the
mapping can be extended to $x$.
This completes the proof.
\end{proof}

\begin{corollary}
Let $G$ be an $(m, n)$-coloured mixed graph with $\Delta \geq 2$.
Then $\chi(G, m, n) \leq (2\Delta-1)(m+2n)^{2\Delta-2}$.
\label{CorSopena}
\end{corollary}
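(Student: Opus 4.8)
The plan is to derive this immediately from Theorem~\ref{ThmMapping} by a suitable choice of parameters followed by a vertex count. First I would set $k = \Delta$. Since $G$ has maximum degree $\Delta \geq 2$, the hypothesis $k \geq 2$ of Theorem~\ref{ThmMapping} is satisfied, so every $Z \in \mathcal{Z}_{m,n,2k-1} = \mathcal{Z}_{m,n,2\Delta-1}$ is $\Delta$-hom-universal. Before invoking this I would record that the family $\mathcal{Z}_{m,n,2\Delta-1}$ is non-empty: because $\Delta \geq 2$ forces $G$ to contain at least one edge or arc, we have $m + 2n \geq 1$, so $K_{m+2n,m+2n}$ admits a $1$-factorization and the constructions of $\mathcal{H}_{m+2n,m+2n}$ and hence of $\mathcal{Z}_{m,n,2\Delta-1}$ go through.

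Next, I would fix any such $Z$. By $\Delta$-hom-universality there is a homomorphism $G \to Z$, which is by definition a vertex colouring of $G$ using $|V(Z)|$ colours; therefore $\chi(G,m,n) \leq |V(Z)|$.

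Finally I would compute $|V(Z)|$ directly from the definition of $\mathcal{Z}_{m,n,q}$ with $q = 2\Delta - 1$. A vertex is a tuple $(i; v_1,\ldots,v_q)$ in which the index $i$ ranges over the $q$ values $1,\ldots,q$, the coordinate $v_i$ is the fixed symbol ``$\cdot$'', and each of the remaining $q-1$ coordinates independently takes one of the $m+2n$ values $1,\ldots,m+2n$. Hence $|V(Z)| = q\,(m+2n)^{q-1} = (2\Delta-1)(m+2n)^{2\Delta-2}$, and combining this with the previous paragraph yields $\chi(G,m,n) \leq (2\Delta-1)(m+2n)^{2\Delta-2}$, as required.

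There is essentially no hard step: the substance is already contained in Theorem~\ref{ThmMapping}, and what remains is a parameter substitution, a check that the target graph exists, and a one-line count. The only point needing a moment's care is the bookkeeping in that count — making sure it is the $q-1$ non-index coordinates, not all $q$, that contribute the factor $(m+2n)^{q-1}$ — together with the harmless remark that $m+2n \geq 1$ so that $\mathcal{Z}_{m,n,2\Delta-1} \neq \emptyset$.
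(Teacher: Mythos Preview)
Your proposal is correct and follows exactly the same approach as the paper: set $k=\Delta$, invoke Theorem~\ref{ThmMapping} to get a homomorphism $G\to Z$ for any $Z\in\mathcal{Z}_{m,n,2\Delta-1}$, and then count $|V(Z)|=(2\Delta-1)(m+2n)^{2\Delta-2}$. The paper's proof is the same two sentences without your extra remarks on non-emptiness and the coordinate count, which are harmless elaborations.
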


\begin{proof}
By Theorem  \ref{ThmMapping}, the $(m,n)$-coloured mixed graph $G$ has a
homomorphism to any $Z \in \mathcal{Z}_{m,n,2\Delta-1}$.
The number of vertices of $Z$ is $(2\Delta-1)(m+2n)^{2\Delta-2}$.
\end{proof}

The above corollary generalizes Theorem 4.1 of Sopena \cite{Sopena}.
For oriented graphs, when $\Delta = 1$ the quantity 
$(2\Delta-1)(m+2n)^{2\Delta-2} = 1$, 
while the oriented chromatic number of an oriented graph with maximum 
degree 1 equals 2.
For connected $(m,n)$-coloured graphs with $\Delta = 1$ this gives
$\chi(G, m, n) = 2$.
The $(m,n)$-coloured mixed chromatic number of an 
arbitrary $(m,n)$-coloured mixed graph with $\Delta = 1$
can be larger than 2: consider the the disjoint union of $m$ edges and $n$ arcs, 
one of each possible colour.  
The minimum number of vertices in a 1-hom-universal $(m,n)$-coloured mixed graph is 
the smallest positive integer $c$ such that $\binom{c}{2} \geq m+n$.  

\section{A probabilistic Brooks' Theorem}
\label{SecProb}

Kostochka, Sopena and Zhu \cite{KSZ} used the probabilistic method to prove
the existence of a $k$-hom-universal graph for $k \geq 4$, and  
improve bound for the oriented chromatic number given in \cite{Sopena} as a consequence.  
They note that ``the same'' proof shows that the same bound holds for $\chi(G, 2, 0)$.
In this section we extend their result to $(m,n)$-coloured mixed graphs.


\begin{lemma}
For all  integers $k \geq 4$ and $m,n$ with $m+2n \geq 3$, there exists an $(m,n)$-coloured mixed graph $H$  on $t = k^2(m+2n)^{k+1}$ vertices with Property $P_{i, (k-i)(k-1) + 1}$ for $i = 1, 2, \ldots, k$.
\label{target}
\end{lemma}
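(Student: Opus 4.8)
The plan is to use the probabilistic method (specifically the first-moment / union-bound argument, exactly as in Kostochka--Sopena--Zhu). Fix $k \geq 4$ and $m, n$ with $m + 2n \geq 3$, write $r = m + 2n$ and $t = k^2 r^{k+1}$, and construct a random $(m,n)$-coloured mixed graph $H$ on vertex set $[t]$ as follows: for each unordered pair $\{u, v\}$, independently either leave $u, v$ non-adjacent or choose one of the $r$ possible ``coloured-oriented'' adjacency types, according to a well-chosen probability distribution. The natural choice (matching the underlying-graph density one expects from Corollary \ref{CorSopena}-type counts) is to make $u$ and $v$ adjacent with probability roughly $\tfrac12$, and conditioned on adjacency, to pick each of the $r$ types uniformly, so each specific type appears with probability $\tfrac{1}{2r}$ and the non-adjacency ``type'' with probability $\tfrac12$. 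Then I would show that with positive probability $H$ simultaneously has Property $P_{i,\,(k-i)(k-1)+1}$ for every $i = 1, \dots, k$.

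**Next I would set up the bad events.** For a fixed $i$, a fixed ordered set $X = (v_1, \dots, v_i)$ of $i$ distinct vertices that forms an $(m,n)$-coloured complete subgraph, and a fixed target tuple $L = (c_1, \dots, c_i) \in \mathbb{Z}_r^i$, let $B_{X,L}$ be the event that \emph{fewer than} $(k-i)(k-1)+1$ vertices $x \notin X$ satisfy $a_H(x, X) = L$. For a single vertex $x \notin X$, the events ``$a_H(x,v_j)$ has type $c_j$'' are independent over $j$ (distinct pairs), so $\Pr[a_H(x,X) = L] = (2r)^{-i} =: \rho$, and these are independent over the $t - i$ choices of $x$. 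Hence the number $N$ of good $x$ is $\mathrm{Bin}(t-i, \rho)$ with mean $(t-i)\rho \approx t/(2r)^i = k^2 r^{k+1}/(2r)^i$. Since $i \leq k$ and $r \geq 3$, this mean is comfortably larger than the threshold $(k-i)(k-1)+1 \leq k^2$; I would make this margin quantitative and apply a Chernoff bound to get $\Pr[B_{X,L}] \leq e^{-c\, t \rho}$ for an explicit constant, which is super-exponentially small in $k$. (One should also handle $i = 0$ separately: $P_{0,b}$ just says $|V(H)| \geq b$, which holds trivially since $(k-0)(k-1)+1 = k(k-1)+1 \leq t$; and one must verify that conditioning on $X$ being a complete coloured subgraph does not destroy the independence used for the vertices $x \notin X$ — it does not, since that conditioning only involves pairs inside $X$.)

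**Then I would take a union bound over all $i$, $X$, $L$.** The number of choices of $X$ is at most $t^i \leq t^k$, the number of choices of $L$ is $r^i \leq r^k$, and there are $k$ values of $i$, so the total number of bad events is at most $k\, t^k r^k$, which is only polynomial in $t$ (with exponent depending on $k$). Against this we have the bound $\Pr[B_{X,L}] \leq e^{-c t \rho}$ with $t\rho \geq t/(2r)^k = k^2 r / 2^k \to \infty$ reasonably fast; the point is that $t\rho$ grows like a fixed power of $t$ while the number of events grows like a fixed power of $t$ as well, so I need the Chernoff exponent to beat the union-bound polynomial. This is where the precise choice of $t = k^2 r^{k+1}$ (rather than, say, $k r^k$) matters: the extra factors of $k$ and $r$ are exactly what make $\sum \Pr[B_{X,L}] < 1$. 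I would verify this inequality explicitly. Once $\sum_{i,X,L} \Pr[B_{X,L}] < 1$, there exists an outcome $H$ avoiding all bad events, i.e. an $(m,n)$-coloured mixed graph on $t$ vertices with Property $P_{i,(k-i)(k-1)+1}$ for all $i = 1, \dots, k$, as required.

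**The main obstacle** I expect is purely bookkeeping in the last step: getting the Chernoff tail bound for $\Pr[B_{X,L}]$ sharp enough that, after multiplying by the count $k t^k r^k$ of bad events, the sum is strictly below $1$ for \emph{all} $k \geq 4$ and all $r \geq 3$ — not just asymptotically. Choosing a clean form of the Chernoff bound (e.g. $\Pr[N \leq \tfrac12 \mathbb{E}N] \leq e^{-\mathbb{E}N/8}$) and then checking the resulting inequality $k\, t^k r^k \cdot e^{-t\rho/8} < 1$ term by term in $i$ is the delicate part; it will force the verification that $t\rho$ is not merely $\geq$ the threshold $(k-i)(k-1)+1$ but is at least, say, a constant multiple of $k\log(trk)$. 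The probabilistic structure itself is routine — independence across pairs makes every computation a binomial — and no further ideas beyond those in \cite{KSZ} are needed; the only genuinely new content is tracking the parameter $r = m+2n$ in place of the constant $2$ (for oriented graphs) or $2$ (for $2$-edge-coloured graphs).
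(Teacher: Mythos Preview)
Your overall plan---probabilistic construction plus union bound over all $(X,L)$---is exactly the paper's approach, but the specific random model you chose is wrong and the argument breaks at $i=k$. With each pair adjacent with probability $\tfrac12$ and each type having probability $\tfrac{1}{2r}$, you get $\rho=(2r)^{-i}$, so for $i=k$ the mean number of good extensions is $t\rho = k^2 r^{k+1}/(2r)^k = k^2 r/2^k$. You assert this ``$\to\infty$ reasonably fast'', but in fact $k^2 r/2^k\to 0$ for every fixed $r\ge 3$ as $k\to\infty$. Once the mean drops below $1$, the bad event $B_{X,L}$ (no good extension) has probability close to $1$ for each complete $k$-set $X$, and no tail bound can save the union bound; indeed the random graph will genuinely fail $P_{k,1}$ with high probability. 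Restricting the union to $X$ that are actually complete does not help: the expected number of complete $k$-subsets is still of order $t^k 2^{-\binom{k}{2}}$, and since $r\ge 3>\sqrt{2}$ the factor $r^{k^2}$ inside $t^k$ dominates $2^{-k(k-1)/2}$.

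The fix is simply to take the random model the paper uses: make $H$ a \emph{complete} $(m,n)$-coloured mixed graph, assigning each pair one of the $r=m+2n$ types uniformly and independently. Then $\rho=r^{-i}$ and $t\rho\ge t r^{-k}=k^2 r$, which \emph{does} grow without bound and is comfortably larger than the threshold $(k-i)(k-1)+1\le k^2$ for every $i$. With this correction the rest of your outline (binomial tail bound, union over at most $t^k r^k$ events, and a final elementary inequality valid for all $k\ge 4$, $r\ge 3$) goes through and matches the paper's proof; the paper carries out the tail estimate by a direct computation rather than Chernoff, but either works once $\rho=r^{-i}$.
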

\begin{proof}
Let $c=m+2n$. We shall show that the probability that a random $(m,n)$-coloured mixed complete graph $H$ on $t$ vertices, in which every two vertices are connected by either an edge of some colour or an arc of some colour and orientation independently with probability $1/c$, has the above property with positive probability.


Let $1 \leq i \leq k$ and $X$ be an $i$-subset of $V(H)$.  Let $b \in \mathbb{Z}_{c}^i$, and $\mathcal{E}_{X, b}$ be the event that the number of vertices in $x \in V(H - X)$ with  $a_H(x, X) = b$ is at most $(k-i)(k-1)$, that is, that $H$ does not have Property $P_{i, (k-i)(k-1) + 1}$.  
We want to show that the probability of $\mathcal{E}_{X, b}$ is less than one.

For any fixed vertex $x \in V(H-X)$, we have $[\mathit{Pr}(a_H(x, X) = b)] = c^{-i}$.  Since, for different vertices $x, y \in V - X$ the events $a_H(x, X) = b$ and $a_H(y, X) = b$ are independent, we have
\begin{eqnarray*}
\mathit{Pr}(\mathcal{E}_{X, b}) & = & \sum_{j=0}^{(k-i)(k-1)} {t-i \choose j} (c^{-i})^j (1 - c^{-i})^{((t-i)-j)}\cr
& \leq & (1-c^{-i})^t  \sum_{j=0}^{(k-i)(k-1)} \frac{t^j}{j!} m^{-ij }(1 - c^{-i})^{-(i+j)}\cr
& < & \left[\frac{c-1}{c-2}\right] e^{-tc^{-i}}  \sum_{j=0}^{(k-i)(k-1)} t^j\cr
& < & e^{-tc^{-i}}   t^{(k-i)(k-1)+1}\cr
\end{eqnarray*}
The probability that some event  $\mathcal{E}_{X, b}$ occurs satisfies the following
\begin{eqnarray*}
\mathit{Pr}\left(\bigcup_{X, b} \mathcal{E}_{X, b}\right)& = & \sum_{i=1}^k \ \sum_{|X| = i}\  \sum_{b \in \mathbb{Z}_{c}^i} \mathit{Pr}(a_T(x, X) = b) \cr
& < & \sum_{i = 0}^k {t \choose i} c^i  e^{-tc^{-i}}   t^{(k-i)(k-1)+1}\cr
& < & c^k \sum_{i = 0}^k   e^{-tc^{-i}}   t^{[(k-i)(k-1)+1]+i}\cr
& < &c^k \sum_{i = 0}^k   e^{-tc^{-i}}   t^{(k-i)(k-1)+1+i}\cr
\end{eqnarray*}
In the last sum, for $i=0,1,\ldots, k-1$, the ratio of the $(i+1)$-st term to the $i$-th term is
$$\frac{e^{tc^{-i}} t^{(k-(i+1))(k-1)+1 + (i + 1)}}{e^{tc^{-(i+1)}}  t^{(k-i)(k-1)+1 + i}} 
= \frac{e^{t(c-1)c^{-(i+1)}}}{t^{k-2}}
\geq \frac{e^{t(c-1)c^{-k}}}{t^{k-2}}.
$$
When $t = k^2c^{k+1}$ we have
$$
\frac{e^{t(c-1)cc^{-k}}}{t^{k-2}}
= \frac{e^{k^2(c-1)c}}{(k^2c^{k+1})^{k-2}}
= \frac{e^{k^2(c-1)c}}{k^{2k-4}c^{(k+1)(k-2)}}.
$$
We show the ratio is greater than $c$, that is
$$e^{k^2(c-1)c} > ck^{2k-4}c^{(k+1)(k-2)}.$$ On taking logs of both sides, we see that this is true if and only if $k^2(c-1)c > (2k-4)\ln(k) + [(k+1)(k-2) + 1]\ln(c)$, which holds for all integers $c$ and $k$ with $k \geq 4$ and $c \geq 3$.

Hence, 
\begin{eqnarray*}
\mathit{Pr}\left(\bigcup_{X, b} \mathcal{E}_{X, b}\right)& < & c^k \sum_{i = 0}^k   e^{-tc^{-i}}   t^{(k-i)(k-1)+1+i}\cr
& < & (c / (c-1))\, c^k e^{-tc^{-k}}t^{k+1}\cr
& = & (c / (c-1))\, c^k e^{-ck^2}(k^2c^{k+1})^{k+1}\cr
& = & (c / (c-1))\,  e^{-ck^2} k^{2+2k} c^{(k+1)^2+k}.\cr
\end{eqnarray*}
In order for the probability to be less than one, it is required that
$ k^{2+2k}c^{(k+1)^2+k+1}\leq (c-1) e^{ck^2}$, or equivalently (after taking logs) that 
$$(2k+2)\ln k +(k+1)(k+2)\ln c < \ln(c-1) +ck^2$$
which holds for $k \geq 4$ and $c \geq 3$. This completes the proof.
\end{proof}


\begin{theorem}
Let $G$ be an $(m, n)$-coloured mixed graph.
Then $\chi(G, m, n) \leq \Delta^2 (m+2n)^{\Delta+1}$.
\end{theorem}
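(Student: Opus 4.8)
The plan is to obtain the bound exactly as Corollary~\ref{CorSopena} was obtained from Theorem~\ref{ThmMapping}: I will show that the $(m,n)$-coloured mixed graph $H$ furnished by Lemma~\ref{target}, with the parameter $k$ taken equal to $\Delta$, is $\Delta$-hom-universal; since $H$ has $\Delta^2(m+2n)^{\Delta+1}$ vertices, this yields $\chi(G,m,n)\le\Delta^2(m+2n)^{\Delta+1}$. As Lemma~\ref{target} requires $\Delta\ge 4$ and $m+2n\ge 3$, I first dispose of the remaining cases. If $2\le\Delta\le 3$ the constructive bound $(2\Delta-1)(m+2n)^{2\Delta-2}$ of Corollary~\ref{CorSopena} is already at most $\Delta^2(m+2n)^{\Delta+1}$, since $2\Delta-2\le\Delta+1$ and $2\Delta-1\le\Delta^2$ there. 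If $m+2n\le 2$ the statement is Brooks' Theorem (when $m+2n=1$, for $\Delta\ge 2$) or the bound of Kostochka, Sopena and Zhu~\cite{KSZ} for oriented graphs and for $2$-edge-coloured graphs (when $m+2n=2$); as in those results I assume $\Delta\ge 2$ throughout.

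For the main case, set $k=\Delta\ge 4$, $m+2n\ge 3$, and take $H$ as in Lemma~\ref{target}. Given an $(m,n)$-coloured mixed graph $G$ with $\Delta(G)\le k$, I will construct a homomorphism $\psi\colon G\to H$ by processing $V(G)=\{x_1,\dots,x_p\}$ one vertex at a time in this order, maintaining the invariant $(\ast)$: for every not-yet-processed vertex $q$ and any two already-processed neighbours $u,u'$ of $q$ whose joins to $q$ (recorded with respect to $q$) have different colour or orientation, $\psi(u)\neq\psi(u')$. When $x_r$ is processed, let $u_1,\dots,u_\ell$ be its already-processed neighbours, so $\ell\le\deg_G(x_r)\le k$. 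Because at the step where the later of any two $u_i,u_j$ was processed $x_r$ was an as-yet-unprocessed common neighbour of them, invariant $(\ast)$ forces $\psi(u_i)=\psi(u_j)$ only when the joins $x_ru_i$ and $x_ru_j$ agree; hence $X=\{\psi(u_1),\dots,\psi(u_\ell)\}$ induces an $(m,n)$-coloured complete subgraph of $H$ of some order $a=|X|\le\ell$, the $a$-tuple $L\in\mathbb{Z}_{m+2n}^{a}$ recording the (well-defined) join-types is determined, and Property $P_{a,(k-a)(k-1)+1}$ of $H$ yields at least $(k-a)(k-1)+1$ vertices $y$ with $a_H(y,X)=L$, each joined to every $\psi(u_i)$ as required of a homomorphism. (For the at most one starting vertex of each component one has $\ell=0$ and only needs $|V(H)|$ large, which it is.)

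The step I expect to be the actual obstacle --- and the reason Lemma~\ref{target} carries that particular bound --- is showing that one such $y$ also preserves $(\ast)$. Setting $\psi(x_r)=y$ can threaten $(\ast)$ only through a not-yet-processed neighbour $q$ of $x_r$: one must then have $\psi(x_r)\neq\psi(w)$ for every already-processed neighbour $w\neq x_r$ of $q$ whose join to $q$ differs from that of $x_r$. A collision $\psi(x_r)=\psi(w)$ in which $w$ and $x_r$ share only already-processed common neighbours does no harm, since for such a shared processed neighbour $u_i$ the fact that $\psi$ is already a homomorphism forces the joins $u_ix_r$ and $u_iw$ to agree, so no future obstruction is created. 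As $x_r$ has at most $k-\ell$ not-yet-processed neighbours and at each of them at most $k-1$ already-processed neighbours join it differently from the way $x_r$ does, this forbids at most $(k-\ell)(k-1)\le(k-a)(k-1)$ vertices of $H$, strictly fewer than the $(k-a)(k-1)+1$ available, so a legitimate choice of $\psi(x_r)$ exists. It then remains only to verify that $(\ast)$ is restored --- which is immediate from the definitions --- and to conclude that the process ends with a homomorphism $G\to H$, so that $H$ is $k$-hom-universal. Apart from this invariant bookkeeping, everything (the greedy extension via Property $P_{a,(k-a)(k-1)+1}$ and the arithmetic in the small cases) is routine.
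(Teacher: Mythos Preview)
Your proof is correct and follows essentially the same greedy-extension argument as the paper: order the vertices, and at each step use Property $P_{i,(k-i)(k-1)+1}$ of the target $H$ from Lemma~\ref{target} to choose an image compatible with the already-processed neighbours while avoiding the at most $(k-\ell)(k-1)$ images that would spoil a future extension. The paper's invariant is slightly stronger than your $(\ast)$---it demands distinct images for \emph{any} two processed vertices sharing a common neighbour in $G$, not only when their joins to that neighbour differ---so in the paper one always has $|X|=\ell$ rather than $|X|=a\le\ell$, but the counting is the same; you are also more explicit than the paper in disposing of the case $m+2n\le 2$ via~\cite{KSZ}.
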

\begin{proof}
The statement is clear if $\Delta = 1$.
For $2 \leq \Delta \leq 3$, the statement follows from Corollary \ref{CorSopena}, so assume $\Delta \geq 4$.
Let $H$ be the $(m, n)$-coloured graph whose existence is asserted by Lemma \ref{target}.

Suppose $V(G) = \{v_1, v_2, \ldots, v_n\}$.
For $t = 1, 2, \ldots, n$, let $G_t$ be the subgraph of $G$
induced by $\{v_1, v_2, \ldots, v_t\}$.
Define a homomorphism $f_n: G \to H$ inductively, as follows.
Suppose $f_t: G_t \to H$ has the property that if $v_j, v_k \in V(G_t)$ have
a common neighbour in the underlying undirected graph of $G$,
then $f_t(v_j) \neq f_t(v_k)$.

Suppose $v_{t+1}$ is adjacent in the underlying
undirected graph of $G$ to 
$w_1, w_2, \ldots, w_i \in V(G_t)$.
Let $W_{t+1} = \{w_1, w_2,$ $\ldots, w_i\}$, $a_H(v_{t+1}, W_{t+1}) = b$, and
$X_{t+1} = \{x \in V(H) - W_{t+1}: a_H(x , W_{t+1}) = b\}$.

By Lemma \ref{target}, $|X_{t+1}| \geq 1 + (\Delta - i)(\Delta - 1)$.
Let $Y_{t+1}$ be the set of vertices in $\{v_{t+1}, v_{t+2}, \ldots,$ $v_n\}$
which are adjacent in $G$ to $v_{t+1}$.
Then $|Y_{t+1}| \leq \Delta - i$.
Let $Z_{t+1}$ be the set of vertices $G_{t+1}$ which are adjacent to a vertex in $Y_{t+1}$.
Then $|Z_{t+1}| \leq (\Delta - 1)(\Delta - i)$.
Thus there is a vertex $z_{t+1} \in X_{t+1} - f_t(Z_{t+1})$ 
(recall that vertices with a common neighbour have different images under $f_t$).
Extend $f_t$ to a homomorphism $f_{t+1}: G_{t+1} \to H$ by setting
$f_{t+1}(v_{t+1}) = z_{t+1}$. 
The function $f_{t+1}$ has the desired property.
The result follows.
\end{proof}

\vfill

\noindent
\textsc{Gary MacGillivray
\footnote{Research supported by NSERC}, Mathematics and Statistics, University of Victoria, Victoria, Canada}, 
{\tt gmacgill@math.uvic.ca}\\

\smallskip\noindent
\textsc{Shahla Nasserasr $^1$, School of Mathematical Sciences, Rochester Institute of Technology, Rochester, NY, USA}, 
{\tt shahla@mail.rit.edu}\\

\smallskip\noindent
\textsc{Feiran Yang, Department of Mathematics and Statistics, University of Victoria, Victoria, Canada}, 
{\tt fyang@math.uvic.ca}\\

\bigskip\noindent
{\bf Acknowledgement}.  
The authors are indebted to the referee for pointing out that in the proof of Theorem \ref{Brooksish} it is necessary to make the stronger induction hypothesis that the neighbours of each vertex have distinct images.  Without that, the proof fails.  They also pointed out that the same omission occurs in the proof of the corresponding theorem in \cite{Sopena}.


\begin{thebibliography}{99}

\bibitem{AM}
N.~Alon and T.~H.~Marshall
Homomorphisms of edge-coloured graphs and Coxeter groups,
\emph{J. Algebraic Combin.} {\bf 8} (1998), 5 -- 13.


\bibitem{BrewsterGraves}
R.~C.~Brewster and T.~ Graves. 
Edge-switching homomorphisms of edge-coloured graphs,
\emph{Discrete Math.} {\bf 309} (2009), 5540 -- 5546.


%
%

\bibitem{DNS}
S.~Das, S.~Nandi and S.~Sen,
On chromatic number of colored mixed graphs,
\emph{Algorithms and
Discrete Applied Mathematics - Third International Conference, CALDAM 2017}, Sancoale,
Goa, India, 2017.

\bibitem{DNRS}
S.~Das, S.~Nandi D.~Roy and S.~Sen,
On relative clique number of colored mixed graphs,
https://arxiv.org/abs/1810.05503.

\bibitem{DuffyThesis}
C.~Duffy,
Homomorphisms of $(j, k)$-mixed graphs,
Ph.D. Thesis, Mathematics and Statistics,
University of Victoria, Canada.
{\tt https://dspace.library.uvic.ca:8443/handle/1828/6601}.





%

\bibitem{KlosMacG}
W.~F.~Klostermeyer and G.~MacGillivray, 
Homomorphisms and oriented colorings of equivalence classes of oriented graphs, 
\emph{Discrete Math.} {\bf 274} (2004), 161 -- 172.


\bibitem{KMRS}
W.~F.~Klostermeyer, G.~MacGillivray, A.~Raspaud, and E.~Sopena,
Bounds for the $(m,n)$-mixed chromatic number and the oriented chromatic number,
\emph{Graph Theory Notes of New York} {\bf LXV} (2013), 10 -- 14.

\bibitem{KSZ}
A.~V.~Kostochka, E.~Sopena, and X.~Zhu,
Acyclic and oriented chromatic numbers of graphs,
\emph{J. Graph Theory} {\bf 14} (1997), pp. 331 -- 340.


\bibitem{MPRS}
A.~Montejano,  A.~Pinlou, A.~Raspaud, and E.~Sopena,
Chromatic number of sparse colored mixed planar graphs,
Electronic Notes in Discrete Mathematics
{\bf 34} (2009), 363 -- 367.

\bibitem{NesetrilRaspaud}
J.~Ne\v{s}et\v{r}il and A.~Raspaud,
Colored homomorphisms of colored mixed graphs,
\emph{J. Combin. Theory Ser. B} {\bf 80} (2000), 147 -- 155. 


\bibitem{RS} 
A.~Raspaud and E.~Sopena, 
Good and semi-strong colorings of oriented planar graphs, 
\emph{Information Processing Letters} {\bf 51} (1994), 171 -- 174.


\bibitem{SenThesis}
S.~Sen,
A contribution to the theory of
graph homomorphisms and colorings,
Ph.D Thesis,
\'{E}cole Doctorale de Math\'{e}matique et Informatique de Bordeaux,
L'{U}niversit\'{e} Bordeaux 1,
France.




\bibitem{Sopena}  E.~Sopena, The chromatic number of oriented graphs, 
{\em  Journal of Graph Theory} {\bf 25}  (1997), 191--205.



\end{thebibliography}
\end{document}